\newtheorem{thm}{Theorem}[section]
\newtheorem{prop}[thm]{Proposition}
\newtheorem{cor}[thm]{Corollary}
\newtheorem{lem}[thm]{Lemma}
\newtheorem{conj}[thm]{Conjecture}
\newtheorem{exa}[thm]{Example}
\newtheorem{question}[thm]{Question}
\newcommand{\shu}{\shuffle}
\DeclareMathOperator{\St}{St}
\DeclareMathOperator{\Pk}{Pk}
\DeclareMathOperator{\pk}{pk}
\DeclareMathOperator{\QSym}{QSym}
\DeclareMathOperator{\cDes}{cDes}
\DeclareMathOperator{\cdes}{cdes}
\DeclareMathOperator{\cPk}{cPk}
\DeclareMathOperator{\cpk}{cpk}
\DeclareMathOperator{\cSt}{cSt}
\DeclareMathOperator{\std}{std}
\DeclareMathOperator{\bru}{bru}
\DeclareMathOperator{\cbru}{cbru}
\newcommand{\ben}{\begin{enumerate}}
\newcommand{\een}{\end{enumerate}}
\newcommand{\ble}{\begin{lem}}
\newcommand{\ele}{\end{lem}}
\newcommand{\bth}{\begin{thm}}
\renewcommand{\eth}{\end{thm}}
\newcommand{\bpr}{\begin{prop}}
\newcommand{\epr}{\end{prop}}
\newcommand{\bco}{\begin{cor}}
\newcommand{\eco}{\end{cor}}
\newcommand{\bcon}{\begin{conj}}
\newcommand{\econ}{\end{conj}}
\newcommand{\bde}{\begin{defn}}
\newcommand{\ede}{\end{defn}}
\newcommand{\bex}{\begin{exa}}
\newcommand{\eex}{\end{exa}}
\newcommand{\barr}{\begin{array}}
\newcommand{\earr}{\end{array}}
\newcommand{\btab}{\begin{tabular}}
\newcommand{\etab}{\end{tabular}}
\newcommand{\beq}{\begin{equation}}
\newcommand{\eeq}{\end{equation}}
\newcommand{\bea}{\begin{eqnarray*}}
\newcommand{\eea}{\end{eqnarray*}}
\newcommand{\bal}{\begin{align*}}
\newcommand{\bce}{\begin{center}}
\newcommand{\ece}{\end{center}}
\newcommand{\bpi}{\begin{picture}}
\newcommand{\epi}{\end{picture}}
\newcommand{\bpp}{\begin{picture}}
\newcommand{\epp}{\end{picture}}
\newcommand{\bfi}{\begin{figure} \begin{center}}
\newcommand{\efi}{\end{center} \end{figure}}
\newcommand{\bprf}{\begin{proof}}
\newcommand{\eprf}{\end{proof}\medskip}
\newcommand{\bsl}{\begin{slide}{}}
\newcommand{\esl}{\end{slide}}
\newcommand{\bfr}{\begin{frame}}
\newcommand{\efr}{\end{frame}}
\newcommand{\hqed}{\hfill \qed}
\newcommand{\hso}[1]{\hspace{-1pt}}
\newcommand{\qmq}[1]{\quad\mbox{#1}\quad}
\newcommand{\emp}{\emptyset}
\newcommand{\sbs}{\subset}
\newcommand{\sbe}{\subseteq}
\newcommand{\gauss}[2]{\genfrac{[}{]}{0pt}{}{#1}{#2}}
\def\<{\langle}
\def\>{\rangle}
\newcommand{\ra}{\rightarrow}
\newcommand{\si}{\sigma}
\renewcommand{\th}{\theta}
\newcommand{\bbN}{{\mathbb N}}
\newcommand{\bbP}{{\mathbb P}}
\DeclareMathOperator{\Av}{Av}
\DeclareMathOperator{\des}{des}
\DeclareMathOperator{\Des}{Des}
\DeclareMathOperator{\maj}{maj}
\DeclareMathOperator{\Mod}{mod}
\DeclareMathOperator{\st}{st}
\begin{document}
\pagestyle{plain}

\title{Cyclic shuffle compatibility
}
\author{Rachel Domagalski\\[-5pt]
\small Department of Mathematics, Michigan State University,\\[-5pt]
\small East Lansing, MI 48824-1027, USA, {\tt domagal9@msu.edu}\\
Jinting Liang\\[-5pt]
\small Department of Mathematics, Michigan State University,\\[-5pt]
\small East Lansing, MI 48824-1027, USA, {\tt liangj26@msu.edu}\\
Quinn Minnich\\[-5pt]
\small Department of Mathematics, Michigan State University,\\[-5pt]
\small East Lansing, MI 48824-1027, USA, {\tt minnichq@msu.edu}\\
Bruce E. Sagan\\[-5pt]
\small Department of Mathematics, Michigan State University,\\[-5pt]
\small East Lansing, MI 48824-1027, USA, {\tt bsagan@msu.edu}\\
Jamie Schmidt\\[-5pt]
\small Department of Mathematics, Michigan State University,\\[-5pt]
\small East Lansing, MI 48824-1027, USA, {\tt schmi710@msu.edu}\\
Alexander Sietsema\\[-5pt]
\small Department of Mathematics, Michigan State University,\\[-5pt]
\small East Lansing, MI 48824-1027, USA, {\tt sietsem6@msu.edu}
}

\date{\today\\[10pt]
	\begin{flushleft}
	\small Key Words: cyclic permutation, descent, peak, shuffle compatible
	                                       \\[5pt]
	\small AMS subject classification (2010):  05A05  (Primary),  05A19  (Secondary)
	\end{flushleft}}

\maketitle

\begin{abstract}
Consider a permutation $\pi$ to be any finite list of distinct positive integers.  
A statistic is a function $\St$ whose domain is all permutations.
Let $\pi\shu\si$ be the set of shuffles of two disjoint permutations $\pi$  and $\si$.
We say that $\St$ is shuffle compatible if the distribution of $\St$ over $\pi\shu\si$ depends only on $\St(\pi)$, $\St(\si)$, and the lengths of $\pi$ and $\si$.  This notion is implicit in Stanley's work on $P$-partitions and was first explicitly studied by Gessel and Zhuang.
One of the places where shuffles are useful is in describing the product in the algebra of quasisymmetric functions.  Recently Adin, Gessel, Reiner, and Roichman defined an algebra of cyclic quasisymmetric functions where a cyclic version of shuffling comes into play.  The purpose of this paper is to define and study cyclic shuffle compatibility.  In particular, we show how one can lift shuffle compatibility results for (linear) permutations to cyclic ones.  We then apply this result to cyclic descents and cyclic peaks.  We also discuss the problem of finding a cyclic analogue of the major index.
\end{abstract}

\section{Introduction}

Let $\bbN$ and $\bbP$ be the nonnegative and positive integers, respectively.  If $n\in\bbN$ then let $[n]=\{1,2,\ldots,n\}$.  Given a finite $A\sbe\bbP$ then a {\em linear permutation} of $A$ is a linear arrangement $\pi=\pi_1\pi_2\ldots\pi_n$ of the elements of $A$.  We let 
$$
L(A) = \{\pi \mid \text{$\pi$ is a linear permutation of $A$}\}.
$$
We often drop ``linear" if it is understood from context.  For example,
$$
L(\{1,3,6\})=\{ 136, 163, 316, 361, 613, 631\}.
$$
If $\pi=\pi_1\pi_2\ldots\pi_n$ then $n$ is called the {\em length} of $\pi$, written $\#\pi=|\pi|=n$.  The hash symbol and absolute value sign will also be used for the cardinality of a set.  In fact, we will often treat permutations as sets if no problem will result and write things such as $x\in\pi$ in place of the more cumbersome $\pi_i=x$ for some $i$.

A {\em (linear) permutation statistic} is a function $\St$ whose domain is all linear permutations.  There are four statistics which will concern us here.  A permutation $\pi=\pi_1\pi_2\ldots\pi_n$
has {\em Descent set}
$$
\Des\pi=\{i \mid \pi_i>\pi_{i+1}\},
$$
{\em descent number}
$$
\des\pi=\#\Des\pi,
$$
{\em Peak set}
$$
\Pk\pi=\{i \mid \pi_{i-1}<\pi_i>\pi_{i+1}\},
$$
and {\em peak number}
$$
\pk\pi =\#\Pk\pi.
$$
To illustrate, if $\pi=4218596$ then $\Des\pi=\{1,2,4,6\}$,
$\des\pi=4$, $\Pk\pi=\{4,6\}$, and $\pk\pi=2$.  We will be particularly concerned with a particular type of statistic which is determined by the descent set in the following sense.  Statistic $\St$ is a {\em descent statistic} if  $\Des\pi=\Des\si$ and $\#\pi=\#\si$ implies $\St\pi=\St\si$.  Note that $\Des$ itself, $\des$, $\Pk$, and $\pk$ are all descent statistics.  A statistic which is not a descent statistic would be $\St\pi=\pi_1$ for nonempty $\pi$ and $\St\emp=0$.  We will often need to evaluate a statistic $\St$ on a set of permutations $\Pi$.  So we define the {\em distribution} of $\St$ over $\Pi$ to be
$$
\St\Pi=\{\{\St\pi \mid \pi\in\Pi\}\}.
$$
Note that this is a set with multiplicity since $\St$ may evaluate to the same thing on various members of $\Pi$.  We will sometimes use exponents to denote multiplicities where, as usual, no exponent means multiplicity $1$.  For example
$$
\des\{123, 132, 312, 321\} =
\{\{0, 1, 1, 2\}\} = \{\{ 0, 1^2, 2\}\}.
$$
We will also need to combine functions on permutations and statistics.  If $\Pi$ and $\Pi'$ are sets of permutations and $\St$ is a statistic, then a function $f:\Pi\ra\Pi'$ is {\em $\St$-preserving} if $$
\St f(\pi) = \St\pi
$$
for all $\pi\in\Pi$.

If $\pi\in L(A)$ and $\si\in L(B)$ where $A\cap B=\emp$ then these permutations have {\em shuffle set}
$$
\pi\shu\si =\{ \tau\in  L(A\uplus B) \mid
\text{$\pi,\si$ are subwords of $\tau$}\}
$$
where a subwords of a permutation $\tau$  is a subsequence of (not necessarily consecutive) elements of $\tau$.  For example,
\begin{equation}
\label{25s73}
 25\shu 73 = \{2573, 2753, 2735, 7253, 7235, 7325\}.   
\end{equation}
Whenever we write the shuffle of two permutations, we will tacitly assume that they are from disjoint sets.

An important use of the shuffle set is in computing the product of two fundamental quasisymmetric functions in the algebra $\QSym$ of quasisymmetric functions.  More information about $\QSym$ can be found in the texts of Luoto, Mykytiuk, and van Willigenburg~\cite{lmv:iqs}, Sagan~\cite{sag:aoc}, or Stanley~\cite{sta:ec2}.  In order to prove that this product formula is well defined, one needs shuffle compatibility.  Roughly speaking, a statistic $\St$ is shuffle compatible if the distribution of $\St$ over $\pi\shu\si$ depends only on $\St\pi$, $\St\si$, and the lengths $\#\pi$ and $\#\si$.  To be precise, $\St$ is {\em shuffle compatible} if for all quadruples $\pi,\pi',\si,\si'$ with $\St\pi=\St\pi'$, $\St\si=\St\si'$, $\#\pi=\#\pi'$, and $\#\si=\#\si'$ we have
$$
\St(\pi\shu\si) = \St(\pi'\shu\si').
$$
For example, from equation~\eqref{25s73} it is easy to see that
$$
\des(25\shu 73)=\{\{1^3,2^3\}\}
$$
The reader can check that this is also the distribution $\des(12\shu 43)$ which is because $\des$ is shuffle compatible.
Shuffle compatibility is implicit in Stanley's theory of $P$-partitions~\cite{sta:osp} and he proved a result implying the shuffle compatibility of $\des$.  Gessel and Zhuang~\cite{gz:sps} were the first to define the concept explicitly and prove many shuffle compatibility results, including those for the other three statistics defined previously.
\begin{thm}
The statistics
$$
\Des, \des, \Pk, \pk
$$
are all shuffle compatible.\hqed
\end{thm}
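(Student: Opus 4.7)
My plan is to reduce all four claims to generating-function identities in the algebra of quasisymmetric functions, handling $\Des$ and $\Pk$ first and then deriving $\des$ and $\pk$ by specialization. The central tools are Stanley's fundamental theorem on $P$-partitions for linear posets, which will deliver the $\Des$ case, together with its peak analog due to Stembridge for the $\Pk$ case.

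For $\Des$, I would assign to each permutation $\pi$ the fundamental quasisymmetric function $F_\pi \in \QSym$, which by construction depends only on the pair $(\Des\pi, |\pi|)$. Stanley's shuffle product formula then gives
\[
F_\pi \cdot F_\si \;=\; \sum_{\tau \in \pi \shu \si} F_\tau,
\]
with all summands on the right lying in the homogeneous component of degree $|\pi|+|\si|$. Since the fundamental quasisymmetric functions indexed by distinct $(|\tau|,\Des\tau)$ are linearly independent, the multiset $\{\{\Des\tau : \tau \in \pi\shu\si\}\}$ is uniquely recovered from $F_\pi F_\si$, and hence depends only on $(\Des\pi,|\pi|)$ and $(\Des\si,|\si|)$. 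For $\Pk$ I would run a parallel argument using Stembridge's peak quasisymmetric functions $K_\pi$, which depend only on $(\Pk\pi, |\pi|)$ and satisfy an analogous shuffle product identity inside the peak subalgebra of $\QSym$; linear independence of the $K_\pi$ in each degree again yields that $\{\{\Pk\tau : \tau\in\pi\shu\si\}\}$ is a function of the allowed data.

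Passing from the set-valued results to $\des$ and $\pk$ is not automatic, since two permutations with the same value of $\des$ (respectively $\pk$) can differ in their descent (respectively peak) sets. My plan is to establish a closed generating-function identity of the form
\[
\sum_{\tau \in \pi \shu \si} t^{\des\tau} \;=\; P\bigl(\des\pi,\, \des\si,\, |\pi|,\, |\si|;\, t\bigr),
\]
with $P$ an explicit polynomial in $t$ whose coefficients depend only on the four scalars indicated, and analogously for $\pk$. The $\des$ identity should fall out by applying the principal quasisymmetric specialization $x_i \mapsto q^{i-1}$ (or similar) to $F_\pi F_\si$ and collapsing the result with $q$-Chu--Vandermonde, since $F_\pi$ under this specialization depends only on $\des\pi$ and $|\pi|$; the $\pk$ identity would be obtained by the parallel specialization inside the peak subalgebra. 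The main obstacle I expect is this final step for $\pk$: peak sets are subject to boundary constraints (no $1$ or $|\pi|$ may appear) and the $K_\pi$ obey non-trivial relations coming from these, so the indexing and normalization must be arranged with care, both for the set-valued shuffle identity to hold cleanly and for the subsequent specialization to yield a polynomial whose coefficients manifestly depend only on $\pk\pi$, $\pk\si$, $|\pi|$, and $|\si|$.
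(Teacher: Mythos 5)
The paper itself gives no proof of this theorem: it is stated with a closing box and attributed to Stanley (for $\des$, implicitly via $P$-partitions) and to Gessel and Zhuang, so the only fair comparison is with those cited proofs --- and your proposal is essentially a reconstruction of them. Your treatment of $\Des$ and $\Pk$ is sound: the product rule $F_\pi F_\si=\sum_{\tau\in\pi\shu\si}F_\tau$ from $P$-partition theory, linear independence of the fundamental basis in each degree, and the parallel argument with Stembridge's peak functions (where $K_\pi K_\si=\sum_\tau K_\tau$ follows from multiplicativity of enriched $P$-partition enumerators over disjoint unions of chains, and the $K_{n,\Lambda}$ for admissible peak sets $\Lambda$ are linearly independent) do exactly what you claim. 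You also deserve credit for flagging, correctly, that shuffle compatibility of $\des$ and $\pk$ does \emph{not} follow formally from that of $\Des$ and $\Pk$, since these are coarsenings.

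The one concrete misstep is the specialization you name for the $\des$ step. Under $x_i\mapsto q^{i-1}$ one gets $F_\pi\mapsto q^{\maj\pi}/\bigl((1-q)(1-q^2)\cdots(1-q^{|\pi|})\bigr)$, which depends on $\maj\pi$, \emph{not} on $\des\pi$; that specialization proves the $\maj$ shuffle identity (the $q$-binomial formula the paper quotes in its final section), not $\des$ compatibility. The standard fix is the bounded evaluation $x_1=\cdots=x_j=1$, $x_i=0$ for $i>j$, under which
$$
F_\pi(1^j)=\binom{|\pi|+j-1-\des\pi}{|\pi|}
$$
depends only on $(\des\pi,|\pi|)$; evaluating the shuffle identity at every $j\ge 0$ and using that the polynomials $j\mapsto\binom{m+n+j-1-k}{m+n}$ for $0\le k\le m+n-1$ are linearly independent recovers the multiset $\{\{\des\tau \mid \tau\in\pi\shu\si\}\}$ from $\des\pi$, $\des\si$, $m$, $n$ alone. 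The $\pk$ case is repaired the same way using Stembridge's enriched order polynomial of a chain, which depends only on $\pk$ and length. With that substitution your outline is a complete and correct proof, matching the Gessel--Zhuang route that the paper cites.
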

\noindent Other work on shuffle compatibility has been done by 
Baker-Jarvis and Sagan~\cite{bs:bps}, by O{\u{g}}uz~\cite{ogu:ces}, and by
Grinberg~\cite{gri:sps}.

Recently Adin, Gessel, Reiner, and Roichman~\cite{agrr:cqf} introduced a cyclic version of quasisymmetric functions with a corresponding cyclic shuffle operation.  A linear permutation $\pi=\pi_1\pi_2\ldots\pi_n$ has a corresponding {\em cyclic permutation} $[\pi]$ which is the set of all its rotations, namely
$$
[\pi]=\{\pi_1\pi_2\ldots,\pi_n,\
\pi_2\ldots,\pi_n\pi_1,\
\ldots,\
\pi_n\pi_1\ldots\pi_{n-1}\}.
$$
For example
$$
[3725]=\{3725, 7253, 2537, 5372\}
$$
so that 
$$
[3725]=[7253]=[2537]=[5372].
$$
The reader should bear in mind the difference between the notation $[n]$ for an interval of integers and $[\pi]$ for a cyclic permutation.
We let
$$
C(A) = \{[\pi] \mid \text{$[\pi]$ is a cyclic permutation of $A$}\}.
$$
To illustrate
$$
C(\{2,3,5,7\})=\{[2357], [2375], [2537], [2735], [2573], [2753]\}.
$$

We can lift the linear permutation statistics we have introduced to the cyclic realm as follows.  Define the {\em cyclic descent set} and {\em cyclic descent number} for a linear permutation $\pi=\pi_1\pi_2\ldots\pi_n$ to be
$$
\cDes\pi =\{i\in[n] \mid 
\text{$\pi_i>\pi_{i+1}$ where $i$ is taken modulo $n$}\}
$$
and
$$
\cdes\pi=\#\cDes\pi,
$$
respectively.
Returning to $\pi=4218596$ we have $\cDes\pi=\{1,2,4,6,7\}$ and $\cdes\pi=5$.  Now  a cyclic permutation $[\pi]$ will have {\em cyclic descent set}
$$
\cDes[\pi]=\{\{\cDes\si \mid \si\in[\pi]\}\}.
$$
Notice that this is a multiset since two different $\si$ can have the same cyclic descent set.  Also define the {\em cyclic descent number}
$$
\cdes[\pi]=\cdes\pi.
$$
Note that this is well defined since all elements of $[\pi]$ have the same number of cyclic descents.  To illustrate
$$
\cDes[3725] =\{\{\cDes 3725, \cDes 7253, \cDes 2537, \cDes 5372\}\}
=\{\{\ \{1,3\}^2,\ \{2,4\}^2\ \}\}
$$
and $\cdes[3725]=2$.  An important point later will be that if $S$ is one of the sets in $\cDes[\pi]$ then all of the other member sets can be obtained by adding $i$ to the elements of $S$ modulo $n=\#\pi$ as $i$ runs over $[n]$. 

We deal with peaks in a similar manner, defining the {\em cyclic peak set} and {\em cyclic peak number} in the linear case by
$$
\cPk\pi =\{i\in[n] \mid 
\text{$\pi_{i-1}<\pi_i>\pi_{i+1}$ where $i$ is taken modulo $n$}\}
$$
and
$$
\cpk\pi=\#\cpk\pi,
$$
respectively.  For $\pi=4218593$ we have $\cPk\pi=\{1,4,6\}$ and $\cpk\pi=3$.  The extension to cyclic permutations is as expected
$$
\cPk[\pi]=\{\{\cPk\si \mid \si\in[\pi]\}\}
$$
and
$$
\cpk[\pi]=\cpk\pi.
$$
In general, a {\em cyclic permutation statistic} is any function $\cSt$ whose domain is cyclic permutations.  And $\cSt$ is a {\em cyclic descent statistic}
if  $\cDes[\pi]=\cDes[\si]$ and $\#\pi=\#\si$ implies $\cSt[\pi]=\cSt[\si]$.  All four of our cyclic statistics are cyclic descent statistics.
Functions preserving cyclic permutation statistics are defined in the obvious way.

The definitions for shuffles follow the same pattern already established.
Given $[\pi]\in C(A)$ and $[\si]\in C(B)$ where $A\cap B=\emp$ we define their {\em cyclic shuffle set} to be
$$
[\pi]\shu[\si] =\{[\tau]\in C(A\uplus B) \mid
\text{$[\pi],[\si]$ are circular subwords of $[\tau]$}\}.
$$
Alternatively, there are rotations $\pi'$ and $\si'$ of $\pi$ and $\si$, respectively, which are both linear subwords of $\tau$.  To illustrate,
\begin{equation}
\label{13s24}
[13]\shu [24] = \{[1324], [1342], [1234], [1432], [1243], [1423]\}.    
\end{equation}
Call a cyclic permutation statistic $\cSt$ {\em cyclic shuffle compatible}
if for all quadruples $[\pi]$, $[\pi']$, $[\si]$, $[\si']$ with $\cSt[\pi]=\cSt[\pi']$, $\cSt[\si]=\cSt[\si']$, $\#\pi=\#\pi'$, and $\#\si=\#\si'$ we have
$$
\cSt([\pi]\shu[\si]) = \cSt([\pi']\shu[\si']).
$$
The aim of the present work is to study this concept.  In particular, we will prove the following theorem.
\begin{thm}
\label{cSt}
The statistics
$$
\cDes, \cdes, \cPk, \cpk
$$
are all cyclic shuffle compatible.
\end{thm}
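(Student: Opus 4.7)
My plan is to reduce cyclic shuffle compatibility of all four statistics to that of $\cDes$ alone, and then prove the latter by lifting the linear shuffle compatibility of $\Des$ (Theorem~1.1) to the cyclic setting via an averaging argument. The reduction step is immediate: $\cdes$, $\cPk$, and $\cpk$ are all cyclic descent statistics, as observed right after their definitions, so their distributions over any set of cyclic permutations are determined by the values of $\cDes[\cdot]$ together with the length. Thus I only need to show cyclic shuffle compatibility of $\cDes$.

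For $\cDes$, the key move is to pass to the multiset of all linear representatives of cyclic shuffles. Given cyclic permutations $[\pi]$ and $[\sigma]$ on disjoint alphabets, with $m=\#\pi$ and $n=\#\sigma$, I set
$$
M \;=\; \biguplus_{\hat\pi\in[\pi],\ \hat\sigma\in[\sigma]} \hat\pi\shu\hat\sigma.
$$
Since every $[\tau]\in[\pi]\shu[\sigma]$ of length $m+n$ contributes each of its $m+n$ rotations to $M$ exactly once, one obtains the identity of multisets
$$
\biguplus_{\hat\tau\in M}\cDes(\hat\tau)\ =\ \biguplus_{[\tau]\in[\pi]\shu[\sigma]}\cDes[\tau].
$$
Once the left-hand side is shown to depend only on $\cDes[\pi],\cDes[\sigma],m,n$ (the \emph{key lemma}), the full multiset $\cDes([\pi]\shu[\sigma])$ can be recovered, because each $\cDes[\tau]$ is exactly one $\bbZ/(m+n)$-orbit of subsets of $[m+n]$ in which every element of the orbit $O$ appears with multiplicity $(m+n)/|O|$; partitioning the flattened multiset into such orbits then lets one read off the number of $[\tau]$'s with each cyclic descent orbit.

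The main obstacle is the key lemma. For a single $\hat\pi\shu\hat\sigma$ the distribution of $\cDes$ is not determined by $\cDes(\hat\pi),\cDes(\hat\sigma)$ alone, since the cyclic wrap descent and each $A$-$B$ internal boundary descent depend on genuine numerical comparisons between an element of $A$ and an element of $B$. I would handle this by fixing a shuffle type $T\in\{A,B\}^{m+n}$ and decomposing the cyclic descents of each $\hat\tau$ into three kinds: internal $\pi$-descents, internal $\sigma$-descents, and $A$-$B$ boundary descents (including the cyclic wrap between positions $m{+}n$ and $1$). The first two kinds are governed, after summing over rotations, by the linear shuffle compatibility of $\Des$ (Theorem~1.1) together with the fact that the multiset $\{\{\Des(\hat\pi):\hat\pi\in[\pi]\}\}$ is determined by $\cDes[\pi]$, since $\Des(\hat\pi)=\cDes(\hat\pi)\cap[m-1]$ and $\cDes$ shifts under rotation. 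For each boundary of the third kind, as $\hat\pi$ and $\hat\sigma$ range independently over $[\pi]$ and $[\sigma]$ the relevant first and last letters range uniformly over $A$ and $B$, and the global identity $\#\{(a,b)\in A\times B:a<b\}+\#\{(a,b)\in A\times B:a>b\}=mn$ forces the boundary contributions to depend only on $m$ and $n$, not on the particular alphabets $A$ and $B$. Combining the internal and boundary contributions should show that $\biguplus_{\hat\tau\in M}\cDes(\hat\tau)$ depends only on the required invariants, and the theorem follows.
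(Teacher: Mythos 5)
You get two things right that are worth acknowledging: the averaging identity (each $[\tau]\in[\pi]\shu[\si]$ contributes its $m+n$ rotations, once each, to $\biguplus_{\hat\pi\in[\pi],\hat\si\in[\si]}\hat\pi\shu\hat\si$) and the orbit-counting recovery of $\cDes([\pi]\shu[\si])$ from the flattened multiset are both correct. But there are two genuine gaps. The first is the opening reduction. That $\cdes$, $\cPk$, $\cpk$ are cyclic descent statistics means each is a function of $(\cDes[\pi],\#\pi)$; however, the hypothesis of cyclic shuffle compatibility for, say, $\cdes$ only supplies $\cdes[\pi]=\cdes[\pi']$, which does \emph{not} imply $\cDes[\pi]=\cDes[\pi']$. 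For instance $\cdes[1324]=\cdes[1243]=2$, while $\cDes[1324]=\{\{\,\{1,3\}^2,\{2,4\}^2\,\}\}$ and $\cDes[1243]=\{\{\,\{1,2\},\{2,3\},\{3,4\},\{1,4\}\,\}\}$ differ, so cyclic shuffle compatibility of $\cDes$ says nothing about such a pair. The same inference in the linear setting would prove every descent statistic shuffle compatible, contradicting the fact cited in Section~\ref{a} that the descent statistic $\bru$ is not linearly shuffle compatible. This is precisely why the paper proves each of the four statistics separately, applying the Lifting Lemma (Lemma~\ref{ll}) with $\St=\Des,\des,\Pk,\pk$ in turn; your reduction to $\cDes$ alone cannot be repaired as stated.

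The second gap is your key lemma, which (given your recovery step) is equivalent to the cyclic shuffle compatibility of $\cDes$ itself, and whose proposed proof fails at the boundary analysis. Fix a type $T$ and a boundary slot in it: as $(\hat\pi,\hat\si)$ runs over the $mn$ pairs of rotations, the adjacent pair $(x,y)$ at that slot runs over $A\times B$ exactly once, so the number of descents produced there is $\#\{(x,y)\in A\times B : x>y\}$. This depends on how $A$ and $B$ interleave, not only on $m$ and $n$: with $A=\{1,2\}$, $B=\{3,4\}$ it is $0$, while with $A=\{3,4\}$, $B=\{1,2\}$ it is $mn=4$, and both choices have identical $\cDes[\pi]$, $\cDes[\si]$, $m$, $n$ (every $2$-cycle has cyclic descent multiset $\{\{\{1\},\{2\}\}\}$). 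So the claimed per-slot uniformity is false; any compensation occurs only globally across slots and types, and your identity $\#\{a<b\}+\#\{a>b\}=mn$ does not produce it. Moreover, since $\cDes$ is set valued you must control the \emph{joint} distribution of boundary outcomes with the positions of internal descents, but $\cDes(\hat\pi)$ and the boundary letters of $\hat\pi$ vary together with the rotation, a correlation that counting cannot see. The paper resolves exactly these two difficulties bijectively: Lemma~\ref{cStLem} eliminates the alphabet dependence via the $\cDes$-preserving adjacent-transposition maps $T_i$ (justifying standardization to $A=[m]$, $B=[n]+m$), and Lemma~\ref{ll} then cuts each cyclic shuffle at the maximal letter (the map $M$) and at the letters of $\pi$ (the maps $S_i$), importing $\St$-preserving bijections from linear shuffle compatibility. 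Without an analogue of Lemma~\ref{cStLem} and a positional (not merely enumerative) argument, your key lemma remains unproved.
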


The rest of this paper is structured as follows.
Rather than proving each of the cases of Theorem~\ref{cSt} in an ad hoc manner, we will develop a method for lifting linear shuffle compatibility results to cyclic ones.  This will be done in the next section.  Then in Section~\ref{a} we will apply our Lifting Lemma to each of the four statistics in turn.  Finally, we will end with a section of comments and a future direction for research.


\section{The Lifting Lemma}
\label{llSec}

In this section we will provide a general method for proving cyclic shuffle compatibility results as corollaries of linear ones.  We will first set some notation. 
In the notations like $C(A)$ and $L(A)$ we will often drop the parentheses if they would cause double delimiters.  For example $L[n]$ is all the linear permutations of $[n]$.

If $A\sbs\bbP$ and $n\in\bbN$ then let
$$
A+n = \{a+n \mid a\in A\}.
$$
In particular,
$$
[m]+n = \{n+1,n+2,\ldots,n+m\}.
$$
We will also need to add integers to sets modulo some $m\in\bbP$.  So if $A\sbe[m]$ then define
$$
A+n\ (\Mod m) =\{a+n\ (\Mod m) \mid a\in A\}
$$
where the representatives are chosen to be in $[m]$.  For example
if $A=\{2,4,5\}$ and $m=6$ then
$$
A+3 = \{5,7,8\}
$$
and
$$
A+3\ (\Mod 6) = \{1,2,5\}.
$$

We will also need the notion of standardization.  Suppose $A,B\sbs\bbP$  with $\#A=\#B=n$ and 
$\pi=\pi_1\pi_2\ldots\pi_n\in L(A)$.  The {\em standardization of $\pi$ to $B$} is 
$$
\std_B \pi=f(\pi_1) f(\pi_2)\ldots f(\pi_n)\in L(B)
$$
where $f:A\ra B$ is the unique order-preserving bijection between $A$ and $B$.  To illustrate, if $A=\{2,4,6,7\}$ and $B=\{1,3,8,9\}$ then
$\std_B(4762) = 3981$.  If $B=[n]$ then we write just $\std \pi$ for $\std_{[n]} \pi$ and call this the {\em standardization} of $\pi$.  Standardization for cyclic permutations is defined in the analogous manner.  For example, $\std [79254] = [45132]$.

We first prove a result about cyclic descent statistics which is a cyclic analogue of one in the linear case~\cite{bs:bps}.
\begin{lem}
\label{cStLem}
Let $\cSt$ be a cyclic descent statistic.  For any four cyclic permutations $[\pi]$, $[\pi']$, $[\si]$, $[\si']$ such that 
$$
\std[\pi]=\std[\pi'] \qmq{and} \std[\si]=\std[\si']
$$
we have
$$
\cSt([\pi]\shu[\si]) = \cSt([\pi']\shu[\si']).
$$
\end{lem}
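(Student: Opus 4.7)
Since $\cSt$ is a cyclic descent statistic, $\cSt[\tau]$ depends only on $\cDes[\tau]$ and $\#\tau$, so it suffices to prove $\cDes([\pi]\shu[\si]) = \cDes([\pi']\shu[\si'])$ as multisets. The plan is to lift the linear standardization lemma of Baker-Jarvis and Sagan (which asserts the analogous fact for $\Des$ over linear shuffles) to the cyclic setting.

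First I would fix compatible linear representatives: since $\std[\pi] = \std[\pi']$, one can select $\pi \in [\pi]$ and $\pi' \in [\pi']$ with $\std \pi = \std \pi'$ as linear permutations (for instance, take the rotations whose standardizations begin with $1$), and likewise for $\si$ and $\si'$. Because standardization commutes with rotation, for each index $j$ one then has $\std \pi^{(j)} = \std \pi'^{(j)}$, and similarly $\std \si^{(k)} = \std \si'^{(k)}$ for each $k$.

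Next, for each $[\tau] \in [\pi]\shu[\si]$ I would pick the canonical linear representative $\tau^*$ starting with $\min(\pi \cup \si)$ and decompose it as $\tau^* = \min(\pi \cup \si)\cdot \rho$. Then $\rho$ is a linear shuffle of the reduced permutation (whichever of $\pi, \si$ contains the overall minimum, with its minimum removed) with some rotation of the other. Since $\tau^*_1$ is the overall minimum, one has $\Des \tau^* = \{i+1 \mid i \in \Des \rho\}$ and $\cDes \tau^* = \Des \tau^* \cup \{m+n\}$, so $\cDes \tau^*$ is entirely determined by $\Des \rho$. Invoking Baker-Jarvis-Sagan on each linear shuffle $\rho$ and summing over the relevant rotations, the multiset of $\cDes \tau^*$'s is determined by $\std[\pi]$ and $\std[\si]$; since $\cDes[\tau]$ is the orbit of $\cDes \tau^*$ under the cyclic shift action on subsets of $[m+n]$, this recovers the full distribution $\cDes([\pi]\shu[\si])$.

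The main obstacle I anticipate is case-matching in the decomposition above: whether $\min(\pi \cup \si)$ lies in $\pi$ or in $\si$ is not determined by the standardizations, so $(\pi, \si)$ and $(\pi', \si')$ can fall in opposite cases (for example, $(1, 23)$ and $(3, 12)$ have matching standardizations but opposite cases). I plan to address this either by a transitivity argument through a carefully chosen intermediate pair whose case agrees with each side, or by unifying the two cases through aggregation over all linear representatives of each cyclic shuffle, so that the case-dependence of the wrap-descent contribution averages out.
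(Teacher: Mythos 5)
Your reduction machinery is sound as far as it goes: the passage from $\cSt$ to $\cDes$ (lengths all being $m+n$), the bijection $[\tau]\mapsto\tau^*$ from $[\pi]\shu[\sigma]$ onto the disjoint union of the linear shuffle sets $\hat\pi\shu\sigma^{(k)}$ (where $\hat\pi$ is the rotation of $\pi$ starting at the global minimum, with that minimum deleted, and $\sigma^{(k)}$ runs over rotations of $\sigma$), the identity $\cDes\tau^*=(\Des\rho+1)\uplus\{m+n\}$, and the recovery of $\cDes[\tau]$ as the shift-orbit of $\cDes\tau^*$ are all correct; applying the linear lemma of Baker--Jarvis and Sagan~\cite{bs:bps} rotation-by-rotation then settles the case where the global minimum lies in corresponding components on both sides. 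But the obstacle you flag is a genuine gap, and neither proposed repair works as stated. Transitivity through a single intermediate is impossible: your only comparison tool requires the global minima of the two pairs to lie in corresponding components, and ``minimum in $\pi$ versus in $\sigma$'' is a binary attribute, so every pair along any chain of such comparisons must be in the same case --- contradicting that the two ends are in opposite cases. The averaging idea fails at the outset because $\cDes$ of a linear word is not a descent statistic: whether $m+n\in\cDes\tau$ depends on comparing $\tau_{m+n}$ with $\tau_1$, which $\Des\tau$ does not determine (compare $132$ and $231$), so the linear lemma gives matching $\Des$ distributions over the shuffles $\pi^{(a)}\shu\sigma^{(b)}$ of rotation pairs but says nothing about the wrap contributions, and no cancellation is in evidence.

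The gap is closable within your framework, but it needs a second tool: the analogous decomposition anchored at the global \emph{maximum}, where the representative $\tau^*$ starting at $\max(\pi\cup\sigma)$ satisfies $\cDes\tau^*=\{1\}\uplus(\Des\rho+1)$, again determined by $\Des\rho$. With both tools one chains through \emph{two} intermediates: from a pair with minimum in $\pi$ to one with minimum in the $\pi$-component and maximum in the $\sigma$-component (minimum tool), then to one with minimum and maximum both in the $\sigma$-component (maximum tool), then to $(\pi',\sigma')$ with minimum in $\sigma'$ (minimum tool); such intermediates with prescribed standardizations always exist. For comparison, the paper avoids the case split entirely and uses no linear input: it reduces to ground set $[m+n]$ and inducts on the number of out-of-order pairs $(i,j)\in\pi\times\sigma$ with $i>j$, using an explicit $\cDes$-preserving map $T_i$ that swaps the consecutive values $i-1,i$ unless they are cyclically adjacent. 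That local move crosses your two cases without noticing; your approach, once patched as above, has the complementary merit of deriving the cyclic lemma directly from the known linear one.
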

\begin{proof}
Since $\cSt$ is a cyclic descent statistic, its values only depend on the relative order of adjacent elements.  So it suffices to prove the case when
$$
[\pi]\uplus[\si] = [\pi']\uplus[\si'] = [m+n]
$$
where $m=\#\pi=\#\pi'$ and $n=\#\si=\#\si'$.  For simplicity, let $A=[m]$ and $B=[n]+m$.

We claim that it suffices to find, for any $\pi$ and $\si$ as in the previous paragraph, a $\cSt$-preserving bijection
$$
[\pi]\shu[\si] \ra\std_A[\pi]\shu\std_B[\si].
$$
For from this map and the hypothesis of the lemma we have
$$
\cSt([\pi]\shu[\si]) =\cSt(\std_A[\pi]\shu\std_B[\si])
=\cSt(\std_A[\pi']\shu\std_B[\si'])= \cSt([\pi']\shu[\si']).
$$

We will show the existence of this bijection by induction on the size of the set of what we will call out-of-order pairs
$$
O=\{(i,j) \in \pi\times \si \mid i>j\}. 
$$
If $\#O=0$ then $[\pi]\in C(A)$ and $[\si]\in C(B)$.  It follows that
$[\pi]=\std_A[\pi]$ and $[\si]=\st_B[\si]$ so the identity map will do.

For the induction step, let $\#O>0$.  Then there must be a pair $(i,i-1)\in O$.  Let $\pi''=(i-1,i)\pi$ and $\si''=(i-1,i)\si$ where $(i-1,i)$ is the transposition which exchanges $i-1$ and $i$.
We will be done if we can construct a $\cSt$ preserving bijection
$$
T_i:[\pi]\shu[\si] \ra [\pi'']\shu[\si''].
$$
This is because $\pi'',\si''$ have fewer out-of-order pairs and so, by induction, there is a $\cSt$-preserving bijection
$[\pi'']\shu[\si''] \ra\std_A[\pi'']\shu\std_B[\si'']$ which, when composed with $T_i$, will finish the construction.

Define $T_i$ by
$$
T_i[\tau]=
\begin{cases}
[(i-1,i)\tau] &\text{if $i-1,i$ are not cyclically adjacent in $[\tau]$,}\\
[\tau]  & \text{else.}
\end{cases}
$$
We must first check that $T_i$ is well defined in that $T_i[\tau]\in[\pi'']\shu[\si'']$.  This is true if $i-1$ and $i$ are not cyclically adjacent since $i-1$ and $i$ have been swapped in all three cyclic permutations involved.  If they are adjacent then the relative order of the elements of $[\tau]$ corresponding to $[\pi]$ and $[\pi'']$ are the same, and similarly for $[\si]$ and $[\si'']$.  So leaving $[\tau]$ fixed again gives a shuffle in the range.

Finally, we need to verify that $T_i$ is $\cSt$ preserving.  Since $\cSt$ is a descent statistic, it suffices to show that $T_i$ is $\cDes$ preserving.  Certainly this is true if $[\tau]$ is fixed.  And if it is not, then $i-1,i$ are not cyclically adjacent in $[\tau]$.  But switching $i-1$ and $i$ could only change a cyclic descent into a cyclic ascent or vice-versa if these two elements were adjacent.  So in this case $\cDes[(i-1,i)\tau]=\cDes[\tau]$ and we are done.
\end{proof}

As an example of the map $T_i$, consider the shuffle set in~\eqref{13s24}.  Here we can take $i=3$ since $3\in 13$ and $2\in 24$.  For $[\tau]=[1324]$ we have $2$ and $3$ cyclically adjacent so
$$
T_3[1324]=[1324]\in[12]\shu[34]
$$
as desired.  On the other hand, in $[1342]$ the $2$ and $3$ are not cyclically adjacent so
$$
T_3[1342]=[1243].
$$
Note that $[1243]\in[12]\shu[34]$ and 
$$
\cDes [1243] = \{\{\ \{1,2\},\ \{2,3\},\ \{3,4\},\ \{1,4\}\ \}\}
=\cDes [1342].
$$

We can use the previous lemma to drastically cut down on the number of cases which need to be checked to obtain cyclic shuffle compatibility.  In particular, the component permutations in the shuffles to be considered can be on consecutive intervals of integers.  And one can keep one component of the shuffle constant while the other varies.  
\begin{cor}
\label{cStCor}
Suppose that $\cSt$ is  cyclic descent statistic.  The following are equivalent.
\begin{enumerate}
    \item[(a)]  The statistic $\cSt$ is cyclic shuffle compatible.
    \item[(b)]  If $\cSt([\pi])=\cSt([\pi'])$ where $[\pi],[\pi']\in C[m]$ and
    $[\si]\in C([n]+m)$ for some $m,n\in\bbN$ then
    $$
    \cSt([\pi]\shu[\si]) =\cSt([\pi']\shu [\si]).
    $$
    \item[(c)] If $\cSt([\si])=\cSt([\si'])$ where $[\si],[\si']\in C([n]+m)$ and
    $[\pi]\in C[m]$ for some $m,n\in\bbN$ then
    $$
    \cSt([\pi]\shu[\si]) =\cSt([\pi]\shu [\si']).
    $$ 
\end{enumerate}
\end{cor}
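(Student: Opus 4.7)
The implications (a)$\Rightarrow$(b) and (a)$\Rightarrow$(c) are immediate, since (b) and (c) are merely special cases of (a) in which one shuffle argument is held fixed and the alphabets are the canonical intervals $[m]$ and $[n]+m$. The content lies in the converse directions, and by the symmetry of the shuffle operation the arguments (b)$\Rightarrow$(a) and (c)$\Rightarrow$(a) are completely analogous; I will therefore plan only the first, with the second obtained by swapping the roles of $\pi$ and $\si$ throughout.

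Given four cyclic permutations $[\pi],[\pi'],[\si],[\si']$ meeting the hypotheses of cyclic shuffle compatibility, with $m=\#\pi=\#\pi'$ and $n=\#\si=\#\si'$, my plan is to use Lemma~\ref{cStLem} to pass to canonical alphabets, apply (b) to change the first component, then re-standardize and apply (b) a second time to change the second component. First, let $[\pi^*]=\std_{[m]}[\pi]$, $[\pi'^*]=\std_{[m]}[\pi']$, $[\si^*]=\std_{[n]+m}[\si]$, and $[\si'^*]=\std_{[n]+m}[\si']$. Two applications of Lemma~\ref{cStLem} yield
\[
\cSt([\pi]\shu[\si])=\cSt([\pi^*]\shu[\si^*]) \qmq{and} \cSt([\pi']\shu[\si'])=\cSt([\pi'^*]\shu[\si'^*]).
\]
Because standardization preserves the cyclic descent set and $\cSt$ is a cyclic descent statistic, we also have $\cSt[\pi^*]=\cSt[\pi'^*]$ and $\cSt[\si^*]=\cSt[\si'^*]$. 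Hypothesis (b), with $[\si^*]$ playing the fixed second component, then gives $\cSt([\pi^*]\shu[\si^*])=\cSt([\pi'^*]\shu[\si^*])$.

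To replace $[\si^*]$ by $[\si'^*]$ in the second slot I re-standardize so that $[\pi'^*]$ now lives on the ``large'' translated alphabet: set $[\rho]=\std_{[n]}[\si^*]$, $[\rho']=\std_{[n]}[\si'^*]$, and $[\tau]=\std_{[m]+n}[\pi'^*]$. Lemma~\ref{cStLem} gives $\cSt([\pi'^*]\shu[\si^*])=\cSt([\tau]\shu[\rho])$ and $\cSt([\pi'^*]\shu[\si'^*])=\cSt([\tau]\shu[\rho'])$, and $\cSt[\rho]=\cSt[\rho']$. A second application of (b)---with $m$ and $n$ interchanged, so that $[\rho],[\rho']\in C[n]$ are the varying first components and $[\tau]\in C([m]+n)$ is the fixed second component---combined with the set-wise symmetry $[\tau]\shu[\rho]=[\rho]\shu[\tau]$ of shuffling, yields $\cSt([\tau]\shu[\rho])=\cSt([\tau]\shu[\rho'])$. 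Chaining all the displayed equalities concludes $\cSt([\pi]\shu[\si])=\cSt([\pi']\shu[\si'])$.

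The main obstacle is not conceptual but bookkeeping: at every invocation of Lemma~\ref{cStLem} I must verify that the two pairs of cyclic permutations lie on disjoint alphabets and have pairwise matching standardizations, and at each use of (b) I must confirm that the fixed component sits on the translated interval of the correct size. The essential insight, making everything fit together, is that the symmetry of shuffling, combined with the Lifting Lemma, collapses the a priori distinct roles of the first and second arguments so that a single hypothesis like (b) is enough to change either component.
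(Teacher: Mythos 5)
Your proof is correct and takes essentially the same route as the paper's: the paper also standardizes via Lemma~\ref{cStLem}, applies (b) to replace the first component, re-standardizes so that the other component sits on the small interval $[n]$ (with the fixed one on $[m]+n$), and applies (b) a second time using the symmetry of the shuffle set, handling (c) by an analogous argument. The paper merely compresses your bookkeeping into a single chain of five equalities alternating Lemma~\ref{cStLem} and (b), so nothing of substance differs.
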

\begin{proof}
We will prove the equivalence of (a) and (b) since the equivalence of (a) and (c) is similar.  Clearly (a) implies (b).  For the converse, let $\pi,\pi',\si,\si'$ be any four permutations satisfying the hypothesis of the cyclic shuffle compatible definition and let $m=\#\pi=\#\pi'$, $n=\#\si=\#\si'$.  Also let
$A=[m]$, $B=[n]+m$, $A'=[m]+n$, and $B'=[n]$.  Then, using Lemma~\ref{cStLem} and (b) alternately
\begin{align*}
 \cSt([\pi]\shu[\si])
&=\cSt(\std_A[\pi] \shu \std_B[\si])\\ 
&=\cSt(\std_A[\pi'] \shu \std_B[\si])\\
&=\cSt(\std_{A'}[\pi'] \shu \std_{B'}[\si])\\
&=\cSt(\std_{A'}[\pi'] \shu \std_{B'}[\si'])\\
&= \cSt([\pi']\shu[\si'])
\end{align*}
which is what we wished to prove.
\end{proof}

In order to prove the Lifting Lemma, we will need to define two functions.
For $i\in[n]$ we construct the {\em splitting map} $S_i:C[n]\ra L[n]$ as follows.  If $[\pi]\in C[n]$ then let $S_i[\pi]$ be the unique linear permutation in $[\pi]$ which starts with $i$.  For example,
$$
S_3[45132] = 32451.
$$

Also define the {\em maximum removal} map $M:C[n]\ra L[n-1]$ by first applying $S_n$ to $[\pi]$ and then removing the initial $n$.  To illustrate
$$
M[45132] = 1324.
$$
Note that $M$ is a bijection.  We similarly define
$M:C([n]+m)\ra L([n-1]+m)$ using $m+n$ in place of $n$, as in
$$
M[67354] = 3546.
$$

Finally, we say that $y$ is {\em between} $x$ and $z$ in $[\pi]$ if this is true of any linear permutation in $[\pi]$ where $x$ occurs to the left of $z$.
If $[\pi]\in C[m]$, $i\in[m]$, and $[\si]\in C([n]+m)$ define
$$
[\pi]\shu_i [\si] = \{[\tau]\in [\pi]\shu [\si] \mid \text{only elements of $\si$ are between $m+n$ and $i$ in $[\tau]$}\}.
$$
For example,
$$
[12]\shu[34] = 
\{[1234], [1243], [1324], [1423], [1342], [1432]\}
$$
with
$$
[12]\shu_1 [34] = 
\{[1234], [1243], [1324]\} 
$$
and
$$
[12]\shu_2 [34] = \{[1423], [1342], [1432]\}.
$$
Note that $[\pi]\shu [\si]$ is the disjoint union of the 
$[\pi]\shu_i [\si]$ for $i\in[m]$ since we can decompose the full shuffle set by finding the first element $i\in[m]$ that one encounters cyclically after $m+n$.

\begin{lem}[Lifting Lemma]
\label{ll}
Let $\cSt$ be a cyclic descent statistic and $\St$ be a shuffle compatible linear descent statistic such that the following conditions hold.
\begin{enumerate}
    \item[(a)] For any $[\tau],[\tau']$ of the same length
    $$
    \St(M[\tau])=\St(M[\tau']) \qmq{implies} \cSt[\tau]=\cSt[\tau'].
    $$
    \item[(b)] Given any $[\pi],[\pi']\in C[m]$ such that $\cSt[\pi]=\cSt[\pi']$, there exists a bijection $f:[m]\ra[m]$ such that for all $i$ and $j=f(i)$
    $$
    \St(S_i[\pi]) = \St(S_j[\pi']).
    $$
\end{enumerate}
Then $\cSt$ is cyclic shuffle compatible.
\end{lem}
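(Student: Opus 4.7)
The plan is to combine Corollary~\ref{cStCor} with a decomposition of the cyclic shuffle set through the maximum removal map $M$. By Corollary~\ref{cStCor}(b), it suffices to prove $\cSt([\pi]\shu[\si])=\cSt([\pi']\shu[\si])$ whenever $\cSt[\pi]=\cSt[\pi']$ with $[\pi],[\pi']\in C[m]$ and $[\si]\in C([n]+m)$.

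The key step is to establish that for each $i\in[m]$, the maximum removal map restricts to a bijection
\[
M\colon [\pi]\shu_i[\si]\longrightarrow S_i[\pi]\shu M[\si].
\]
Indeed, $M[\tau]$ is the linear permutation obtained by reading $[\tau]$ cyclically starting immediately after $m+n$, so its $\si$-subword is the rotation of $\si$ beginning after $m+n$, which is $M[\si]$. By the definition of $\shu_i$, the first $\pi$-element encountered cyclically after $m+n$ is $i$, so the $\pi$-subword of $M[\tau]$ is the rotation starting at $i$, namely $S_i[\pi]$. Injectivity is inherited from $M\colon C[m+n]\ra L[m+n-1]$, and surjectivity follows by sending $\rho\in S_i[\pi]\shu M[\si]$ back to the class $[\,(m+n)\rho\,]$, the defining condition of $\shu_i$ guaranteeing that it lands in the correct piece. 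Taking the disjoint union over $i$ yields
\[
M([\pi]\shu[\si])=\bigsqcup_{i\in[m]} \bigl(S_i[\pi]\shu M[\si]\bigr),
\]
and likewise for $[\pi']$.

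The two hypotheses then close the argument. Condition~(b) supplies a bijection $f\colon[m]\ra[m]$ with $\St(S_i[\pi])=\St(S_{f(i)}[\pi'])$, and since $\St$ is linearly shuffle compatible with the common second factor $M[\si]$, one obtains the multiset equality $\St(S_i[\pi]\shu M[\si])=\St(S_{f(i)}[\pi']\shu M[\si])$ for every $i$. Summing over $i$ and reindexing by $f$ yields $\St(M([\pi]\shu[\si]))=\St(M([\pi']\shu[\si]))$. Since $M$ is a bijection, this multiset equality provides a bijection $\Phi\colon[\pi]\shu[\si]\ra[\pi']\shu[\si]$ satisfying $\St(M[\tau])=\St(M(\Phi[\tau]))$ for all $[\tau]$, and then condition~(a) implies $\cSt[\tau]=\cSt(\Phi[\tau])$ pointwise, giving the desired equality of $\cSt$-distributions.

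The main obstacle is the fibration step: verifying that $M$ sends $[\pi]\shu_i[\si]$ onto the \emph{full} linear shuffle $S_i[\pi]\shu M[\si]$, and that the defining condition of $\shu_i$ is exactly what is needed to make the inverse $\rho\mapsto[\,(m+n)\rho\,]$ well-defined into the correct piece (so that no linear shuffle is missed and no cyclic shuffle is double-counted across different values of $i$). Once that combinatorial fact is pinned down, the remainder is a bookkeeping exercise converting the reindexing from~(b) and linear shuffle compatibility into a multiset identity to which condition~(a) applies pointwise.
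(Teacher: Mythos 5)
Your proof is correct and follows essentially the same route as the paper: reduce via Corollary~\ref{cStCor} to comparing $\cSt([\pi]\shu[\si])$ with $\cSt([\pi']\shu[\si])$, decompose into the pieces $[\pi]\shu_i[\si]$, use $M$ to identify each piece with the linear shuffle $S_i[\pi]\shu M[\si]$, and then combine hypothesis (b) with linear shuffle compatibility and hypothesis (a) to transfer the $\St$-preserving bijection back to a $\cSt$-preserving one. Your explicit verification that $M$ restricts to a bijection $[\pi]\shu_i[\si]\ra S_i[\pi]\shu M[\si]$ fills in a step the paper leaves implicit, but the argument is otherwise the same.
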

\begin{proof}
By Corollary~\ref{cStCor}, it suffices to show that if $[\pi],[\pi']$ are as given in (b) and $\si\in C([n]+m)$ then $\cSt([\pi]\shu[\si]) =\cSt([\pi']\shu [\si])$.  The remarks preceding the lemma show that this reduces to proving
\begin{equation}
\label{shu_i}
   \cSt([\pi]\shu_i [\si]) =\cSt([\pi']\shu_j [\si]) 
\end{equation}
for all $i\in[m]$ and $j=f(i)$.

It follows that $\St(S_i[\pi]) = \St(S_j[\pi'])$ by (b).  So, since $\St$ is shuffle compatible, we have
$\St(S_i[\pi]\shu \si') = \St(S_j[\pi']\shu \si')$
where $\si'=M[\si]$.  Thus there is an $\St$-preserving bijection
$$
\th:S_i[\pi]\shu \si' \ra S_j[\pi']\shu \si'.
$$
This gives rise to a map
$$
\th':[\pi]\shu_i [\si]
\stackrel{M}{\ra} S_i[\pi]\shu \si'
\stackrel{\th}{\ra} S_j[\pi']\shu \si'
\stackrel{M^{-1}}{\ra} [\pi']\shu_j [\si]
$$
Since this is a bijection, to show~\eqref{shu_i} it suffices to 
prove that $\th'$ is $\cSt$-preserving.  So
take $[\tau]\in [\pi]\shu_i [\si]$ and $[\tau']=\th'[\tau]$.
Then $M[\tau'] = \th\circ M[\tau]$.  Since $\th$ is $\St$ preserving, we have $\St M[\tau'] = \St M[\tau]$.  But then hypothesis (a) implies that $\cSt[\tau]=\cSt[\tau']$ which is what we wished to prove.
\end{proof}


\section{Applications}
\label{a}

The hypotheses in the Lifting Lemma may seem strange at first glance.  But they can be quite easy to verify, making it a useful tool.  We will see four instances of this by proving Theorem~\ref{cSt} using its aid.

\begin{thm}
\label{cDes}
The statistic $\cDes$ is cyclic shuffle compatible.
\end{thm}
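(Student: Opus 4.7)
The plan is to invoke the Lifting Lemma (Lemma~\ref{ll}) with $\cSt=\cDes$ and $\St=\Des$. Since $\Des$ was already shown to be shuffle compatible (in the theorem of Gessel--Zhuang cited earlier) and $\cDes$ is by its very definition a cyclic descent statistic, the work reduces to checking hypotheses (a) and (b) of the lemma.

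For hypothesis (a), I would focus on $S_n[\tau]$, the unique linear representative of $[\tau]$ starting with the maximum element $n$. Writing $S_n[\tau]=n\alpha_1\alpha_2\cdots\alpha_{n-1}$ where $\alpha=M[\tau]$, a direct inspection of the cyclic descent positions shows that $\cDes(S_n[\tau])$ is entirely determined by $\Des\alpha$: position $1$ is always a cyclic descent (since $n$ is maximal), position $n$ never is (since $\alpha_{n-1}<n$), and the intermediate cyclic descents correspond exactly to $\Des\alpha+1$. The remark following the definition of $\cDes[\pi]$ in the Introduction then guarantees that the full multiset $\cDes[\tau]$ is recoverable from any one of its members by adding constants modulo $n$. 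Consequently, $\Des(M[\tau])=\Des(M[\tau'])$ forces $\cDes(S_n[\tau])=\cDes(S_n[\tau'])$, and hence $\cDes[\tau]=\cDes[\tau']$.

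For hypothesis (b), the observation to make is that $\Des(S_i[\pi])$ is immediately recovered from $\cDes(S_i[\pi])$ as the intersection with $[m-1]$, since the only positional difference between a linear descent set and a cyclic descent set on a permutation of length $m$ lies at position $m$. Given $\cDes[\pi]=\cDes[\pi']$ as multisets, the multiset equality yields a bijection $f:[m]\to[m]$ pairing each $\cDes(S_i[\pi])$ with some $\cDes(S_{f(i)}[\pi'])$, and this same $f$ then matches the linear descent sets, exactly as required.

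I do not expect a serious obstacle: the hypotheses of the Lifting Lemma were engineered precisely so that statistics like $\cDes$ slot in cleanly. The most delicate point is the bookkeeping in (b), where one must remember that $\cDes[\pi]$ is a \emph{multiset} (distinct rotations can produce equal cyclic descent sets) so the bijection $f$ is not canonical; however, any matching witnessing the multiset equality will serve the lemma's needs.
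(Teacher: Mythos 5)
Your proof is correct, and it follows the paper's overall strategy (the Lifting Lemma with $\St=\Des$), but your verification of hypothesis (b) takes a genuinely different and shorter route. For (a) you and the paper do the same thing: the paper's key identity $\Des\tau=\{1\}\uplus(\Des M[\tau]+1)$ for the representative $\tau$ beginning with its maximum is exactly your computation of $\cDes(S_n[\tau])$ (the two sets agree because position $n$ is never a cyclic descent when the word starts with the maximum), and both arguments then finish with the Introduction's remark that all members of the multiset $\cDes[\tau]$ are modular shifts of any one member. For (b), the paper first uses the multiset equality to align representatives with $\cDes\pi=\cDes\pi'=A$, defines $f$ positionally by $f(\pi_k)=\pi'_k$, and then checks $\Des(S_i[\pi])=\Des(S_j[\pi'])$ through a two-case computation with modular shifts, according to whether $k-1\in A$. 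You instead extract $f$ directly from the multiset equality $\cDes[\pi]=\cDes[\pi']$: since $[\pi]=\{S_i[\pi] \mid i\in[m]\}$ and the $m$ rotations have distinct first entries, any matching witnessing the multiset equality gives $\cDes(S_i[\pi])=\cDes(S_{f(i)}[\pi'])$, and the identity $\Des\si=\cDes\si\cap[m-1]$ (valid for any $\si$ of length $m$, since a cyclic and a linear descent set can differ only at position $m$) immediately converts this to the required equality of linear descent sets, with no case analysis at all. What the paper's explicit positional bijection buys is a concrete map that serves as a template reused for $\cPk$ in Theorem~\ref{cPk}; your $f$ is non-canonical, as you correctly note, but the Lifting Lemma only requires existence, so your argument is complete as stated.
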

\begin{proof}
We will verify the hypotheses of Lemma~\ref{ll} using $\St=\Des$.  For (a), suppose that $\#\tau=\#\tau'=n$ and 
\begin{equation}
\label{Des=}  
\Des(M[\tau])=\Des(M[\tau']).
\end{equation}
Let us assume that $\tau,\tau'$ were chosen from their cyclic equivalence class so that $\tau_1=\max\tau$ and similarly for $\tau'$.  Then $M[\tau]=\tau_2\tau_3\ldots\tau_n$.  And by the choice of $\tau_1$ we see that
\begin{equation}
\label{U1}  
\Des\tau=\{1\}\uplus(\Des M[\tau]+1).
\end{equation}
  Since the same statement holds with $\tau'$ in place of $\tau$ and~\eqref{Des=} holds, we have
$\Des\tau=\Des\tau'$.  But $\Des\tau$ is one set in the multiset $\cDes[\tau]$, and the others are gotten by adding each $i\in[n]$ to all elements of $\Des\tau$ modulo $n$.  The same being true of $\cDes[\tau']$ shows that
$\cDes[\tau]=\cDes[\tau']$ as desired.

For (b), we are given $[\pi],[\pi']\in C[m]$ with $\cDes[\pi]=\cDes[\pi']$ and must construct the necessary bijection.  From this assumption, we can choose $\pi$ and $\pi'$ such that 
\begin{equation}
\label{cDes=}
\cDes\pi=\cDes\pi'=A.   
\end{equation}
for some set $A$.  Define $f:[m]\ra[m]$ by
$f(\pi_k)=\pi_k'$ for all $k\in[m]$.  Let $i=\pi_k$ and $j=\pi_k'$.  To show that $\Des(S_i[\pi])=\Des(S_j[\pi'])$ there are two cases depending on whether $k-1\in A$ or not, where $k-1$ is taken modulo $m$. 

If $k-1\not\in A$ then $\pi_k,\pi_k'$ are not the second elements in cyclic descents of their respective  permutations.  From this and equation~\eqref{cDes=} it follows that
$$
\Des(S_i[\pi]) = A - k + 1\ (\Mod m) = \Des(S_j[\pi']).
$$
If $k-1\in A$ then the same equalities hold with $A$ replaced by $A'$ which is $A$ with $k-1$ removed.  The completes the proof of (b) and of the theorem.
\end{proof}

\begin{thm}
\label{cdes}
The statistic $\cdes$ is cyclic shuffle compatible.
\end{thm}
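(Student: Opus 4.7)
The plan is to apply the Lifting Lemma (Lemma~\ref{ll}) with the linear statistic $\St=\des$, which is known to be shuffle compatible by the theorem of Gessel--Zhuang cited above. The two conditions (a) and (b) of the Lifting Lemma both reduce to elementary counting.

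For (a), the key observation is that $\cdes[\tau]$ can be read off from $\des(M[\tau])$. Given $[\tau]$ of length $n$, choose the representative $\tau$ so that $\tau_1=\max\tau=n$. Then $M[\tau]=\tau_2\tau_3\ldots\tau_n$, and the cyclic descent at position $1$ is forced (since $\tau_1=n$), the cyclic descent at position $n$ is ruled out (since $\tau_n<n=\tau_1$), and for $2\le i\le n-1$ position $i$ is a cyclic descent of $\tau$ if and only if it is a descent of $M[\tau]$ shifted by one. This gives
$$
\cdes[\tau]=\cdes\tau=1+\des(M[\tau]),
$$
so $\des(M[\tau])=\des(M[\tau'])$ immediately implies $\cdes[\tau]=\cdes[\tau']$.

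For (b), let $d=\cdes[\pi]=\cdes[\pi']$. For each $i\in[m]$, the linear permutation $S_i[\pi]$ starts with $i$, and its linear descents are precisely the cyclic descents of $[\pi]$ except possibly the one at the position immediately before $i$ in the cyclic order. More precisely, if $j$ denotes the cyclic predecessor of $i$ in $[\pi]$, then
$$
\des(S_i[\pi])=\begin{cases} d-1 & \text{if } j>i,\\ d & \text{if } j<i.\end{cases}
$$
Let $A=\{i\in[m]\mid \des(S_i[\pi])=d-1\}$ and define $A'$ analogously for $[\pi']$. Since each cyclic descent of $[\pi]$ contributes exactly one element to $A$, we have $|A|=d=|A'|$, so $|[m]\setminus A|=|[m]\setminus A'|$ as well. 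Choose any bijection $f:[m]\to[m]$ sending $A$ to $A'$; by construction $\des(S_i[\pi])=\des(S_{f(i)}[\pi'])$ for all $i$, which is condition (b).

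The only step that requires care is (b), where one must correctly identify which initial elements $i$ cost a descent when cyclic order is cut, but once that count is pinned down the bijection is immediate. With (a) and (b) verified, the Lifting Lemma yields the cyclic shuffle compatibility of $\cdes$.
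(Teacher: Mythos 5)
Your proof is correct and takes essentially the same approach as the paper's: both verify the Lifting Lemma with $\St=\des$, using the relation $\cdes[\tau]=1+\des(M[\tau])$ for condition (a) (the paper's statement $\des\tau=\des(M[\tau])-1$ is a sign typo, as its own equation~\eqref{U1} gives the $+1$ version you state) and, for condition (b), matching up the $d$ starting values whose cut loses a cyclic descent with those that do not. If anything, your treatment of (b) is more explicit than the paper's, which merely extends a bijection $\Des\pi\to\Des\pi'$ to $[m]$ and leaves the verification to the reader.
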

\begin{proof}
We proceed as in the previous proof with $\St=\des$.  For (a) we  assume $\des(M[\tau])=\des(M[\tau'])$.  But from equation~\eqref{U1} we see that $\des\tau=\des(M[\tau]) - 1$ and the same is true for $\tau'$.   Combining this with our initial assumption gives $\des\tau=\des\tau'$.  But $\tau,\tau'$ begin with their largest element so that
$$
\cdes[\tau]=\des\tau=\des\tau'=\cdes[\tau']
$$
which is what we wished to show.

To prove (b), we are given $\cdes[\pi]=\cdes[\pi']$.  As in the preceding paragraph, choose $\pi,\pi'$ to begin with their largest elements so that $\cdes[\pi]=\des\pi$ and similarly for $\pi'$.  Since $\des\pi=\des\pi'$ there is a bijection $\th:\Des\pi\ra\Des\pi'$.  Extend this map to $\th:[m]\ra[m]$ by using any bijection between the complements of $\Des\pi$ and $\Des\pi'$.  The proof that $\th$ has the desired property is now similar to that for $\Des$ and so is left to the reader.
\end{proof}

\begin{thm}
\label{cPk}
The statistic $\cPk$ is cyclic shuffle compatible.
\end{thm}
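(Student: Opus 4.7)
The plan is to apply Lemma~\ref{ll} with $\St=\Pk$, which is shuffle compatible, and to verify its two hypotheses in close parallel with the proof of Theorem~\ref{cDes}. The only essential new feature is that peaks depend on three consecutive entries rather than two, so boundary corrections involve positions $1$ and $m$ rather than just position $m$.

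For hypothesis (a), I would select representatives $\tau,\tau'$ of their cyclic classes beginning with their maximum element, so that $M[\tau]=\tau_2\tau_3\cdots\tau_n$. Because $\tau_1=\max\tau$, position $1$ is automatically a cyclic peak (since $\tau_n<\tau_1>\tau_2$), while positions $2$ and $n$ cannot be cyclic peaks; for $3\le i\le n-1$ the cyclic peak condition on $\tau$ at position $i$ matches the linear peak condition on $M[\tau]$ at position $i-1$. This should yield $\cPk\tau=\{1\}\uplus(\Pk M[\tau]+1)$, the peak analogue of \eqref{U1}. From the hypothesis $\Pk M[\tau]=\Pk M[\tau']$ we then obtain $\cPk\tau=\cPk\tau'$, and the multiset equality $\cPk[\tau]=\cPk[\tau']$ follows because each of these multisets is the orbit of a single cyclic peak set under cyclic shifts modulo $n$, as in the $\cDes$ case.

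For hypothesis (b), given $[\pi],[\pi']\in C[m]$ with $\cPk[\pi]=\cPk[\pi']$, I would choose linear representatives realizing a common cyclic peak set, $\cPk\pi=\cPk\pi'=P$; this is possible because each multiset is such an orbit. Define $f\colon[m]\to[m]$ by $f(\pi_k)=\pi'_k$, which is a bijection since $\pi,\pi'\in L[m]$. For $i=\pi_k$ and $j=f(i)=\pi'_k$, the rotations $S_i[\pi]$ and $S_j[\pi']$ have cyclic peak sets both equal to the same shift $P-(k-1)\ (\Mod m)$. The key observation is that for any linear permutation $\sigma$ of length $m$, $\Pk\sigma=\cPk\sigma\setminus\{1,m\}$, because positions $1$ and $m$ are precisely the ones at which the cyclic peak definition uses wraparound. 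Applying this to both $S_i[\pi]$ and $S_j[\pi']$ gives $\Pk S_i[\pi]=\Pk S_j[\pi']$, completing the verification.

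The main obstacle I foresee is the boundary bookkeeping: in the $\cDes$ argument one only has to decide whether position $m$ of the rotation carries a descent, but for peaks one must simultaneously track both endpoints and confirm that the max-first choice of $\tau_1$ kills exactly the right two candidate positions so that the identity $\cPk\tau=\{1\}\uplus(\Pk M[\tau]+1)$ is exact, not merely an inclusion. Once those index-range identities are pinned down, the rest of the argument translates formally from the proof of Theorem~\ref{cDes}.
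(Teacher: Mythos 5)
Your proposal is correct and takes essentially the same route as the paper: both apply Lemma~\ref{ll} with $\St=\Pk$, verify (a) by choosing max-first representatives so that $\cPk\tau=\{1\}\uplus(\Pk M[\tau]+1)$ and invoking the orbit structure of $\cPk[\tau]$, and verify (b) with the same bijection $f(\pi_k)=\pi'_k$ used for $\cDes$. The only difference is cosmetic and in your favor: your uniform identity $\Pk\sigma=\cPk\sigma\setminus\{1,m\}$ handles the boundary in (b) in one stroke, whereas the paper sketches the same conclusion via subcases on whether $k-1$ and $k-2$ lie in $A=\cPk\pi$.
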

\begin{proof}
This proof parallels the one for $\cDes$, so we will only mention the highlights.  For (a) one sees that $\Pk\tau = \Pk M[\tau]+1$ since $\tau_1=\max\tau$ is not a peak in $\tau$ and is removed in $M[\tau]$.  But this still implies that $\Pk\tau=\Pk\tau'$ and the rest of this part of the demonstration goes through.

For (b), the map $\th$ is constructed in exactly the same way using 
$A=\cPk\pi=\cPk\pi'$.  The only difference with the remaining part of the proof is that there are two subcases when $k-1\not\in A$ depending on whether $k-2\in A$ or not.  If $k-2\in A$ then one loses the peak which was at $\pi_{k-2}$ in $S_i[\pi]$.  On the other hand, the peak set stays the same modulo rotation if $k-2\not\in A$.  But since the analogous statements hold for $\pi'$, the manipulations in these subcases are as before.
\end{proof}

The proof of the next result is based on the proof of Theorem~\ref{cPk} in much the same way that the demonstrations of Theorems~\ref{cDes} and~\ref{cdes} are related.  So the details are left to the reader.
\begin{thm}
\label{cpk}
The statistic $\cpk$ is cyclic shuffle compatible.\hqed
\end{thm}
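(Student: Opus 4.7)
My plan is to apply the Lifting Lemma (Lemma~\ref{ll}) with the linear shuffle-compatible descent statistic $\St=\pk$, so that the proof reduces to verifying hypotheses~(a) and~(b).  The whole argument should mirror the passage from Theorem~\ref{cPk} to Theorem~\ref{cpk} in exactly the way Theorem~\ref{cdes} was derived from Theorem~\ref{cDes}, replacing set-valued statistics by their numerical versions and matching cardinalities rather than actual positions.

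For hypothesis~(a), I would select representatives $\tau,\tau'$ beginning with their respective maxima.  Under this normalization, the analysis in the proof of Theorem~\ref{cPk} already gives $\Pk\tau=\Pk(M[\tau])+1$, and in particular $\pk\tau=\pk(M[\tau])$.  The assumption $\pk(M[\tau])=\pk(M[\tau'])$ therefore yields $\pk\tau=\pk\tau'$.  Placing the maximum at position $1$ also creates exactly one extra cyclic peak there, since $\tau_n<\tau_1>\tau_2$, so $\cpk[\tau]=\pk\tau+1=\pk\tau'+1=\cpk[\tau']$.

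For hypothesis~(b), suppose $[\pi],[\pi']\in C[m]$ with $\cpk[\pi]=\cpk[\pi']=c$; I must produce a bijection $f:[m]\to[m]$ such that $\pk(S_i[\pi])=\pk(S_{f(i)}[\pi'])$ for every $i$.  The key claim is that the multiset $\{\{\pk(S_i[\pi]):i\in[m]\}\}$ depends only on $c$ and $m$.  Writing $i=\pi_k$, the linear peaks of $S_i[\pi]$ are precisely the cyclic peaks of $\pi$ whose position, shifted by $-(k-1)$ modulo $m$, lands in $[2,m-1]$; the two excluded shifted positions $1$ and $m$ correspond respectively to $k\in\cPk\pi$ and $k-1\in\cPk\pi$.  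Hence $\pk(S_{\pi_k}[\pi])=c-|\{k-1,k\}\cap\cPk\pi|$.

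The main subtlety is the observation that no two consecutive positions can both be cyclic peaks, since the inequalities $\pi_{a-1}<\pi_a>\pi_{a+1}$ and $\pi_a<\pi_{a+1}>\pi_{a+2}$ are incompatible at the shared slot $\pi_a$ vs.\ $\pi_{a+1}$.  This forces $|\{k-1,k\}\cap\cPk\pi|\in\{0,1\}$, and each cyclic peak $a$ contributes a loss of $1$ for exactly the two indices $k=a$ and $k=a+1$, which are distinct.  The multiset therefore collapses to $\{\{(c-1)^{2c},c^{m-2c}\}\}$, depending only on $c$ and $m$; the same count holds for $\pi'$, so the multisets coincide and a bijection $f$ matching peak counts exists.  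Once this no-consecutive-cyclic-peaks observation is in hand, everything else is a direct counting match completely analogous to the $\cdes$ case.
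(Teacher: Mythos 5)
Your proof is correct and follows essentially the approach the paper intends: it invokes the Lifting Lemma with $\St=\pk$ and mirrors the $\cPk$ argument numerically, exactly as Theorem~\ref{cdes} mirrors Theorem~\ref{cDes}, which is precisely the route the paper sketches before leaving details to the reader. Your verification supplies those omitted details correctly, including the key observation that no two cyclic peaks are adjacent, which yields $\pk(S_{\pi_k}[\pi])=c-\#(\{k-1,k\}\cap\cPk\pi)$ and hence the distribution $\{\{(c-1)^{2c},\,c^{m-2c}\}\}$ depending only on $c$ and $m$.
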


Lest it appear that cyclic shuffle compatibility follows exactly the same lines as the linear case, let us point out a place where they differ.  
Define a {\em factor} of $\pi$ to be a subsequence of consecutive elements.
A {\em birun} of $\pi$ is a maximal monotone factor.
Let $\bru\pi$ be the number of biruns of $\pi$.  For example,
$\bru 125346= 3$ because of the biruns $125$, $53$, and $346$.
It is easy to see~\cite{gz:sps} that the birun statistic is not linearly
shuffle compatible.
A {\em cyclic birun} of $[\pi]$ is defined in the obvious manner and denoted $\cbru[\pi]$.  Returning to our example,
$\cbru [125346]= 4$ because of the biruns above and $61$.
\begin{thm}
The statistic $\cbru$ is cyclic shuffle compatible.
\end{thm}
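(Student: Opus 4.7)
The plan is to deduce this theorem as an immediate consequence of Theorem~\ref{cpk} rather than via the Lifting Lemma. Such a bypass is forced on us: since $\bru$ is not linearly shuffle compatible, it cannot serve as the input statistic $\St$ in Lemma~\ref{ll}. Instead I would establish the elementary identity
\[
\cbru[\pi] = 2\,\cpk[\pi]
\]
for every cyclic permutation $[\pi]$ of length at least $2$, and then push distributions through it termwise.

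First I would verify the identity. Travelling around the cycle of $[\pi]$, each cyclic birun is a maximal monotone arc, and consecutive biruns meet precisely at the cyclic peaks and cyclic valleys of $[\pi]$. Since peaks and valleys must alternate along any cycle, and since a cyclic permutation with distinct entries of length at least $2$ always contains a strict cyclic peak (the maximum) and a strict cyclic valley (the minimum), the numbers of cyclic peaks and cyclic valleys are equal. Hence $\cbru[\pi]$ equals twice the number of cyclic peaks. For instance, in $[125346]$ the cyclic peaks lie at the entries $5$ and $6$ and the cyclic valleys at $3$ and $1$, matching the four biruns $125$, $53$, $346$, and $61$.

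To finish, suppose $[\pi],[\pi'],[\si],[\si']$ satisfy the hypotheses of cyclic shuffle compatibility for $\cbru$: $\#\pi=\#\pi'$, $\#\si=\#\si'$, $\cbru[\pi]=\cbru[\pi']$, and $\cbru[\si]=\cbru[\si']$. In the main case where both $\#\pi$ and $\#\si$ are positive, the identity converts these hypotheses into $\cpk[\pi]=\cpk[\pi']$ and $\cpk[\si]=\cpk[\si']$, so Theorem~\ref{cpk} yields the multiset equality $\cpk([\pi]\shu[\si]) = \cpk([\pi']\shu[\si'])$. Every $[\tau]$ appearing in either shuffle has length $\#\pi+\#\si\geq 2$, so applying $\cbru[\tau]=2\,\cpk[\tau]$ termwise promotes this to $\cbru([\pi]\shu[\si])=\cbru([\pi']\shu[\si'])$. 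The degenerate cases in which some component is empty reduce immediately to the trivial identity shuffle. The only mild care needed anywhere in the argument is in ensuring that both peaks and valleys exist for the alternation count to apply, which is automatic once the cycle has length at least $2$ and the entries are distinct; this is why I expect no serious obstacle here beyond getting the identity itself cleanly stated.
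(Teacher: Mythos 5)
Your proposal is correct and follows exactly the paper's route: the paper's proof is the one-line observation that cyclic biruns begin and end at cyclic peaks and valleys, giving $\cbru[\pi]=2\cpk[\pi]$, after which the result follows from Theorem~\ref{cpk}. Your write-up merely adds detail the paper leaves implicit (the peak--valley alternation argument for the identity, the termwise transfer of the multiset equality, and the degenerate small-length cases), so there is no substantive difference in approach.
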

\begin{proof}
Clearly cyclic biruns always begin  at a cyclic peak and end at a cyclic valley, or vice-versa.  So $\cbru[\pi]=2\cpk[\pi]$ and the result follows from 
Theorem~\ref{cpk}.
\end{proof}


\section{Remarks and an open question}
\label{roq}

\subsection{Cyclic pattern avoidance and statistics}

We say that $[\si]\in C[n]$ {\em contains the pattern} $[\pi]\in C[m]$ if there is some subsequence $[\si']$ of $[\si]$ such that $\std[\si']=[\pi]$.  Otherwise $[\si]$ {\em avoids} $[\pi]$.  For any set $[\Pi]$ of cyclic permutations we let
$$
\Av_n[\Pi]=\{[\si]\in C[n] \mid 
\text{$[\si]$ avoids $[\pi]$ for all $[\pi]\in[\Pi]$}\}.
$$
Callan calculated $\#\Av_n[\Pi]$ when $[\Pi]$ consists of a single element of $C[4]$.  Gray, Lanning, and Wang then contributed work on cyclic packing of patterns~\cite{GLW:pcc1} and patterns in colored cyclic permutations~\cite{GLW:pcc2}.  Most recently, the authors of this article determined $\#\Av_n[\Pi]$ for all 
$[\Pi]\sbe C[4]$.  They also calculated the generating function
$$
D_n([\Pi];q) =\sum_{[\si]\in\Av_n[\Pi]} q^{\cdes[\si]}.
$$
for the same sets of patterns.

\subsection{Cyclic maj?}

One famous linear statistic which we have not discussed so far is the {\em major index} of $\pi$ which is
$$
\maj\pi=\sum_{i\in\Des\pi} i.
$$
 Stanley's theory of $P$-partitions~\cite{sta:osp} shows that
 if $\#\pi=m$ and $\#\si=n$ then
$$
\sum_{\tau\in\pi\shu\si} q^{\maj\tau}
=q^{\maj\pi+\maj\si}\gauss{m+n}{m}_q
$$
where the last factor is a $q$ binomial coefficient.  Note that this is a $q$-analogue of the fact that $\#(\pi\shu\si)=\binom{m+n}{m}$.

It is easy to see that
\begin{equation}
\label{ShuCard}
  \#([\pi]\shu[\si]) = (m+n-1) \binom{m+n-2}{m-1}.  
\end{equation}
Indeed, write each $[\tau]$ in the shuffle so that it starts with  $\pi_1$.  Then there are $m+n-1$ more slots to be filled and $m-1$ of them must be for the elements $\pi_2\pi_3\ldots\pi_m$ in that order.  The remaining slots can be filled with any of the $n$ rotations of $\si$ for a total of $n\binom{m+n-1}{m-1}$ choices.  This is equivalent to the formula above.  In~\cite{agrr:cqf} the following question was raised.
\begin{question}
Is there a cyclic analogue of $\maj$ which has nice properties such as giving a $q$-analogue of equation~\eqref{ShuCard}?
\end{question}
More generally, one could try to find shuffle compatible cyclic analogues of other linear statistics.



\nocite{*}
\bibliographystyle{alpha}

\newcommand{\etalchar}[1]{$^{#1}$}

\end{document}